\newcolumntype{L}{>{\displaystyle}l}
\newcolumntype{C}{>{\displaystyle}c}
\newcolumntype{R}{>{\displaystyle}r}
\def\t{\theta}
\newcommand{\R}{\ensuremath{\mathbb{R}}}
\newcommand{\N}{\ensuremath{\mathbb{N}}}
\newcommand{\CF}{\ensuremath{\mathcal{F}}}
\newcommand{\CO}{\ensuremath{\mathcal{O}}}
\newcommand{\ov}{\overline}
\newcommand{\la}{\lambda}
\newcommand{\T}{\theta}
\newcommand{\f}{\varphi}
\newcommand{\al}{\alpha}
\renewcommand{\L}{\Lambda}
\renewcommand{\O}{\Omega}
\newcommand{\dom}{\mathrm{dom~}}
\renewcommand{\Im}{{\rm Im~}}
\renewcommand{\ker}{{\rm Ker~}}
\newcommand{\coker}{{\rm Coker~}}
\renewcommand{\d}{\mathrm{d}}
\newcommand{\Id}{\mathrm{Id}}
\newcommand{\s}{\ensuremath{\mathbb{S}}}
\newcommand{\de}{\delta}
\newcommand{\wt}{\widetilde}
\DeclareMathOperator{\dist}{dist}
\def\p{\partial}
\def\e{\varepsilon}
\newtheorem {theorem} {Theorem}
\newtheorem {definition} {Definition}
\newtheorem {proposition}{Proposition}
\newtheorem {corollary}{Corollary}
\newtheorem {lemma}{Lemma}
\newtheorem {remark}{Remark}
\newtheorem {mtheorem} {Theorem}
\newcommand{\ds}{\displaystyle}
\begin{document}
\renewcommand{\arraystretch}{1.5}

\title[Averaging for continuous differential equations]
{Higher order analysis on the existence of\\ periodic solutions in continuous differential equations via degree theory}

\author[D.D. Novaes and F. B. G. Silva]
{Douglas D. Novaes and Francisco B. G. Silva}

\address{Departamento de Matem\'{a}tica, IMECC, Universidade
Estadual de Campinas, Rua S\'{e}rgio Buarque de Holanda, 651, Cidade
Universit\'{a}ria Zeferino Vaz, 13083--859, Campinas, SP, Brazil}
\email{ddnovaes@unicamp.br, fbruno@ime.unicamp.br}

\subjclass[2010]{34C29, 34C25, 47H11}

\keywords{continuous differential equations, periodic orbits, averaging method, degree theory, Brouwer degree, coincidence degree}

\begin{abstract}
Recently, the higher order averaging method for studying periodic solutions of both Lipschitz differential equations and discontinuous piecewise smooth differential equations was developed in terms of the Brouwer degree theory. Between the Lipschitz and the discontinuous piecewise smooth differential equations, there is a huge class of differential equations lacking in a higher order analysis on the existence of periodic solutions, namely the class of continuous non-Lipschitz differential equations. In this paper, based on the degree theory for operator equations, we perform a higher order analysis of continuous perturbed differential equations and derive sufficient conditions for the existence and uniform convergence of periodic solutions for such systems. We apply our results to study continuous non-Lipschitz higher order perturbations of a harmonic oscillator.
\end{abstract}

\maketitle

\section{Introduction and statements of the main results}
 The Averaging Method is a classical tool which is concerned with providing asymptotic estimates for solutions of non-autonomous differential equations in the following standard form \begin{equation}\label{eq:e1}
	x^\prime = \e F(t,x,\e),
	\end{equation}
	where $F\colon\R\times D\times[0,\e_0]\to \R^n$ is a continuous function $T-$periodic in the variable $t,$ with $D$ being an open subset of $\R^n$ and $\e_0 > 0.$ Such asymptotic estimates are given in terms of solutions of an ``averaged equation.'' 
	
The averaging method dates back to the works of Lagrange and Laplace, who provided an intuitive justification of the process and applied it to the problem of perturbations in the solar system \cite{sanders2007averaging}. The first formalization of this procedure was given by Fatou in 1928 \cite{Fa}.  Important contributions to the theory were made by Krylov and Bogoliubov \cite{BK} in 1934 and Bogoliubov \cite{Bo} in 1945. From then on, it has been an efective tool in studying qualitative properties of ordinary differential equations.

In particular, the averaging method has been proven to be very useful in detecting periodic solutions. In \cite{Hale,sanders2007averaging,Ver06}, one can find results providing sufficient conditions for the existence of periodic solutions for sufficiently smooth differential equations. These results are based in a first order analysis, which means that the obtained sufficient conditions depend only on the function $(t,x)\mapsto F(t,x,0),$ neglecting all the information that  the derivatives with respect to $\e$ of $F(t,x,\e)$ can provide. Later, \cite{buicua2004averaging} extended these results for studying the existence and also convergence of periodic solutions for continuous differential equations, Lipschitz or not. The existence of periodic solutions by the first order averaging method can also be obtained as an immediate consequence of the continuation result \cite[Theorem IV.1]{gaines1977coincidence}.  More recently, \cite{LliNovTei2014} performed a higher order analysis for studying periodic solutions of Lipschitz-continuous differential equations in terms of Brouwer degree theory. The averaging method has also been extended for non-smooth differential equations. In this context, the studies by \cite{ItiLliNov17, LliNovRod17, LlilNovTei15, LliMerNov15} generalize the averaging method at any order for studying periodic solutions of discontinuous piecewise smooth differential equations.
	
Between the Lipschitz-continuous and the discontinuous piecewise smooth differential equations, there is a huge class of differential equations \eqref{eq:e1} lacking in a higher order analysis on the existence of periodic solutions, namely the class of continuous (non-Lipschitz) differential equations.  Continuous differential equations with non-Lipschitz nonlinearities appear naturally in applications. We may quote, for instance, neural networks \cite{Bian12,Forti06,Huaiqin11}, weather and climate models \cite{Hanke20,Porz18}, incompressible fluid dynamics \cite{Frisch95,Kol91}, and biological models of competition \cite{Zuowei13}. Under the Lipschitz assumption, analysis strongly relies on the uniqueness property enjoyed by the solutions of differential equations. However, in general, this property is lost for continuous differential equations. Here, motivated by the analysis performed in \cite{buicua2004averaging}, we take advantage of the degree theory for operator equations to overcome this difficulty and perform a higher order analysis on the existence of periodic solutions for continuous differential equation in the standard form \eqref{eq:e1}.

As a fundamental hypothesis on differential equation \eqref{eq:e1}, we shall assume that for a given open bounded subset $V\subset \R^n,$ with $\ov V\subset D,$ 
\begin{itemize}
\item[${\bf H.}$] there exists $\e_1\in (0,\e_0]$ such that, for each $\la\in(0,1)$ and $\e\in(0,\e_1],$ any $T$-periodic solution of the differential equation 
\begin{equation}\label{eq:H}
x'=\e\la F(t,x,\e),\, x\in\ov V,
\end{equation}
 is entirely contained in $V.$
\end{itemize}

\begin{remark}\label{rmk:hyp-h}
In applications, hypothesis ${\bf H}$ can be checked by obtaining a contradiction when its negation is assumed. The negation of hypothesis ${\bf H}$ provides numerical convergent sequences $(\e_m)_{m\in\mathbb{N}}\subset(0,\e_0)$ and $(\la_m)_{m\in\mathbb{N}}\subset(0,1),$ such that $\e_m\to 0$ as $m\to \infty,$ and a sequence of $T$-periodic solutions $x_m(t)\in\ov V$ of $x'=\e_m\la_m F(t,x,\e_m)$ for which there exists $t_m\in[0,T]$ such that $x_m(t_m)\in\partial V$ for each $m\in\mathbb{N}.$ In particular,
\[
x_m(t)=x_m(0)+\e_m\la_m\int_0^t F(s,x_m(s),\e_m) \d s\, \text{ and }\, \int_0^T F(t,x_m(t),\e_m) \d t=0,
\]
for each $m\in\mathbb{N}.$ Furthermore, as an application of  Arzel\'{a}-Ascoli's Theorem, the sequence of functions $(x_m)_{m\in\mathbb{N}}$ can be considered uniformly convergent to a constant function in $\partial V.$

It is worth mentioning that, when the boundary of $V$, $\p V,$ is a smooth manifold, hypothesis {\bf H} holds provided that: ``there exists $\e_1\in(0,\e_0]$ such that, for each $z\in\p V,$ $F(t,z,\e)$ is transversal to $\p V$ at $z,$ for every $t\in[0,T]$ and $\e\in(0,\e_1]$''. Indeed, assume that,  for some $\la\in(0,1)$ and $\e\in(0,\e_1],$ $\f(t)$ is a $T$-periodic solution of $x'=\e\la F(t,x,\e)$ in $\ov V$ which is not entirely contained in $V,$ that is, there exists $\hat t\in[0,T]$ such that $\hat z=\f(\hat t)\in\p V.$ Since $\f(t)\in \ov V$ for every $t\in[0,T],$ we get that $\f'(\hat t)\in T_{\hat z}\p V$ $($tangent space of $\p V$ at $\hat z),$ consequently, $F(\hat t, \hat z,\e)$ is tangent to $\p V$ at $\hat z$. This last sufficient condition, although much more restrictive than hypothesis ${\bf H}$, is more computable and easier to be checked, so it is important to keep it in mind.
\end{remark}

Define the {\it full averaged function}
$f:D\times [0,\e_0]\rightarrow\R^n$ as the average of the right-hand side of \eqref{eq:e1}, that is,
	\[\label{eq:avgd-functions}
	f(z,\e) = \dfrac{1}{T} \int_{0}^{T} \e F(s, z,\e)\d s.
	\]
Our first main result relates the existence of periodic solutions of the differential equation \eqref{eq:e1} to the Brouwer degree of the full averaged function.

\begin{mtheorem}\label{thm:main1} 
Consider the continuous $T$-periodic non-autonomous differential equation \eqref{eq:e1}. Assume that for a given open bounded subset $V\subset \R^n,$ with $\ov V\subset D,$ hypothesis ${\bf H}$ holds,
	\begin{equation}\label{Ha}
	 f(z,\e)\neq 0,\quad \text{ for all }\quad z\in\partial V\quad \text{ and }\quad \e\in(0,\e_1],
	\end{equation}
and $d_B(f(\cdot,\e^*),V,0)\neq 0,$ for some $\e^*\in (0,\e_1].$
	Then, for each $\e\in (0,\e_1],$ there exists a $T-$periodic solution $\f(t,\e)$ of the differential equation \eqref{eq:e1} satisfying $\f(t,\e)\in \ov V,$ for every $t\in[0,T].$ 
\end{mtheorem}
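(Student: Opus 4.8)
The plan is to recast the search for $T$-periodic solutions of \eqref{eq:e1} as a coincidence equation $Lx=Nx$ and then invoke Mawhin's continuation theorem from coincidence degree theory. Fix $\e\in(0,\e_1]$. Let $X=\{x\in C(\R,\R^n):x(t+T)=x(t)\}$ be equipped with the supremum norm, and set $Z=X$. Define the linear operator $L\colon\dom L\subset X\to Z$ by $Lx=x'$, with $\dom L=X\cap C^1(\R,\R^n)$, and the Nemytskii operator $N\colon X\to Z$ by $(Nx)(t)=\e F(t,x(t),\e)$. Then $x\in\dom L$ is a $T$-periodic solution of \eqref{eq:e1} if and only if $Lx=Nx$, and more generally $Lx=\la Nx$ is equivalent to $x$ being a $T$-periodic solution of $x'=\e\la F(t,x,\e)$.

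First I would record the Fredholm structure of $L$: one has $\ker L=\R^n$ (the constant functions) and $\Im L=\{z\in Z:\int_0^T z(t)\,\d t=0\}$, so that $L$ is Fredholm of index zero. I would take the continuous projectors $P\colon X\to X$ and $Q\colon Z\to Z$ both given by the mean value, $Px=Qx=\tfrac1T\int_0^T x(t)\,\d t$, so that $\Im P=\ker L$ and $\ker Q=\Im L$, and identify $\Im Q$ with $\ker L\cong\R^n$ by the identity. The next step, and the one where only the \emph{continuity} (not Lipschitz-continuity) of $F$ is used, is to verify that $N$ is $L$-compact on $\ov\Omega$ for every bounded $\Omega\subset X$: continuity of $F$ makes $N$ continuous and bounded on bounded sets, while the generalized inverse $K_P$ is an integration operator, so $K_P(I-Q)N$ maps bounded sets to bounded and equicontinuous families, hence to precompact sets by Arzel\`a--Ascoli. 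This is precisely the mechanism by which coincidence degree dispenses with the uniqueness of solutions.

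Now comes the crucial choice of region: take $\Omega=\{x\in X:x(t)\in V\ \text{for all}\ t\in[0,T]\}$. Since $x([0,T])$ is compact and $V$ is open, $\Omega$ is open in $X$, and it is bounded because $V$ is bounded. I would then check the three hypotheses of the continuation theorem. (i) For $\la\in(0,1)$, any $x\in\ov\Omega$ with $Lx=\la Nx$ is a $T$-periodic solution of $x'=\e\la F(t,x,\e)$ taking values in $\ov V$; hypothesis $(\mathbf{H})$ then forces $x([0,T])\subset V$, so $x\in\Omega$ and $x\notin\partial\Omega$. (ii) An element of $\ker L\cap\partial\Omega$ is a constant function $x\equiv z$ with $z\in\partial V$, and $QNx=\tfrac1T\int_0^T\e F(t,z,\e)\,\d t=f(z,\e)\neq0$ by \eqref{Ha}. (iii) Under the identification $\ker L\cong\R^n$, the restriction $QN|_{\ker L}$ is exactly $z\mapsto f(z,\e)$, whence $d_B(QN|_{\ker L},\Omega\cap\ker L,0)=d_B(f(\cdot,\e),V,0)\neq0$.

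With (i)--(iii) in hand, Mawhin's continuation theorem provides a solution $x=\f(\cdot,\e)\in\dom L\cap\ov\Omega$ of $Lx=Nx$, that is, a $T$-periodic solution of \eqref{eq:e1} with $\f(t,\e)\in\ov V$. To upgrade this to $\f(t,\e)\in V$ for all $t$, I would apply hypothesis $(\mathbf{H})$ once more with $\la=1$: the solution solves $x'=\e\,F(t,x,\e)$ and takes values in $\ov V$, so $(\mathbf{H})$ forces it entirely into $V$. I expect the main obstacle to lie in the rigorous verification of the $L$-compactness of $N$ and in the clean identification, in (ii)--(iii), of $QN$ restricted to the constants with the full averaged function $f$; the translation of $(\mathbf{H})$ into the no-boundary-crossing condition (i) is the conceptual heart of the argument, as it replaces the flow and uniqueness arguments that would be available in the Lipschitz case.
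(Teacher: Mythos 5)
Your proposal is correct, and it reaches the conclusion by a genuinely different (though closely related) route. You invoke Mawhin's continuation theorem in its classical form, with the unbounded operator $Lx=x'$ on $\dom L=C^1\cap X$, $\Im L=\{z:\int_0^T z=0\}$, mean-value projectors $P=Q$, and the homotopy $Lx=\la Nx$; your conditions (i)--(iii) are exactly matched by the paper's hypotheses, with $({\bf H})$ precluding boundary solutions for $\la\in(0,1)$, \eqref{Ha} handling $\ker L\cap\p\O$ (constants $z\in\p V$, since $QN|_{\ker L}=f(\cdot,\e)$), and the degree hypothesis giving (iii). The paper instead avoids unbounded operators entirely: it takes the \emph{continuous} linear Fredholm operator $Lx(t)=x(t)-x(0)$ from $C_T$ to $C_0$ and the integral operator $N_\e(x)(t)=\int_0^t F(s,x(s),\e)\,\d s$, then uses Proposition \ref{lemma:basic-lemma-02} (Gaines--Mawhin, Proposition IV.1) to replace $Lx=\e N_\e x$ by $Lx=QN_\e x+\e(\Id-Q)N_\e x$, and proves its own reduction result (Lemma \ref{thm:consequence}) via the homotopy $\wt N_\e(x,\la)=QN_\e x+\la\e(\Id-Q)N_\e x$, verifying the no-boundary-fixed-point condition $({\bf C})$ by a contradiction argument in which a vanishing subsequence of $\la_m$ yields constants on $\p V$ contradicting \eqref{Ha} (your condition (ii)) and $\la_m\neq0$ contradicts $({\bf H})$ (your condition (i)). What your route buys is economy: the continuation theorem is quoted wholesale and the boundary analysis is direct rather than by contradiction; what the paper's route buys is a self-contained, reusable degree-reduction lemma in a setting where $L$ is bounded, and a homotopy parametrized so that the small parameter $\e$ appears explicitly, which is what later feeds Theorems \ref{thm:main2} and \ref{thm:main3}. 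Two details you handled correctly that deserve note: the upgrade from $\ov\O$ to $\O$ via $({\bf H})$ at $\la=1$ (the classical theorem only gives a solution in $\dom L\cap\ov\O$), and the Arzel\`a--Ascoli verification of $L$-compactness, which is indeed the sole place continuity of $F$ (without Lipschitzness) enters.
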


In many situations, derivatives of $F$ with respect to $\e$ up to some order are known. 
In these cases, the differential equation \eqref{eq:e1} writes
	\begin{equation}\label{eq:order-k-ode}
	x^\prime = \sum_{i=1}^k\e^i F_i(t,x)  + \e^{k+1} R(t, x, \e),
	\end{equation}
	where $F_i\colon\R\times  D \to \R^n,$ for $i\in\{1, \ldots, k\},$ and $R\colon\R\times D \times [0,\e_0]\to\R^n $ are continuous functions $T-$periodic in the variable $t.$ Accordingly, define $f_0=0$ and, for each $i\in\{1, \ldots, k\},$ denote by $f_i\colon D \to \R^n$ the average of $F_i,$ that is,
		\[
	f_i(z) = \dfrac{1}{T} \int_{0}^{T} F_i(s, z)\d s.
		\]
	Also, define the {\it $k$-truncated averaged function} $\CF_k \colon  D \times [0,\e_0]\to \R^n$ and the \emph{averaged remainder} $r\colon  D \times [0,\e_0]\to \R^n$, respectively, by
	\[
	\CF_k(z,\e) = \sum\limits_{i = 1}^{k}\e^i f_i(z)\, \text{ and } \, r(z,\e) = \dfrac{1}{T}\int_0^T R(s,z,\e)\d s.
	\]

Our second main result relates the existence of periodic solutions of the differential equation \eqref{eq:order-k-ode} to the Brouwer degree of the $k$-truncated averaged function. This is a continuous (non-Lipschitz) version of the higher order averaging theorem shown in \cite{LliNovTei2014}.

\begin{mtheorem}\label{thm:main2}
Consider the continuous $T$-periodic non-autonomous differential equation \eqref{eq:order-k-ode}. Assume that for a given open bounded subset $V\subset \R^n,$ with $\ov V\subset D,$ hypothesis ${\bf H}$ holds,
	\begin{equation}\label{eq:lemma-hyp}
	\lim_{\e\to 0}\inf_{z\in\p V}\left|\dfrac{\CF_{k}(z,\e)}{\e^{k+1}}\right|>\max\{|r(z,\e)|: (z,\e)\in\ov V\times[0,\e_1]\},
	\end{equation}
and $d_B(\CF_{k}(\cdot,\e),V,0)\neq 0,$ for $\e>0$ sufficiently small. Then, there exists $\ov \e\in(0,\e_1]$ such that, for each $\e\in(0,\ov\e]$, the differential equation \eqref{eq:order-k-ode} has a  $T-$periodic solution $\f(t,\e)$  satisfying $\f(t,\e)\in \ov V,$ for every $t\in[0,T].$
\end{mtheorem}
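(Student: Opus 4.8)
The plan is to deduce Theorem~\ref{thm:main2} from Theorem~\ref{thm:main1} by relating the full averaged function $f$ to the $k$-truncated averaged function $\CF_k$. First I would observe that the differential equation \eqref{eq:order-k-ode} is of the standard form \eqref{eq:e1} with $\e F(t,x,\e)=\sum_{i=1}^k\e^iF_i(t,x)+\e^{k+1}R(t,x,\e)$, so that $F(t,x,\e)=\sum_{i=1}^k\e^{i-1}F_i(t,x)+\e^kR(t,x,\e)$. Integrating over one period and recalling the definitions of $f_j$ and $r$ yields the key identity
\[
f(z,\e)=\CF_k(z,\e)+\e^{k+1}r(z,\e),\qquad r(z,\e)=\frac1T\int_0^TR(s,z,\e)\,\d s.
\]
This reduces the problem to verifying the two remaining hypotheses of Theorem~\ref{thm:main1}---the nonvanishing condition \eqref{Ha} and the nonzero-degree condition for $f$---on some possibly smaller interval $(0,\e_2]\subset(0,\e_1]$, since hypothesis $({\bf H})$ is already assumed and persists under restriction of $\e_1$ to $\e_2$.

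For the nonvanishing condition, I would set $M=\max\{|r(z,\e)|:(z,\e)\in\ov V\times[0,\e_1]\}$, which is finite by continuity of $R$ on the compact set $\ov V\times[0,\e_1]$, and $L=\lim_{\e\to0}\inf_{z\in\p V}|\CF_k(z,\e)/\e^{k+1}|$, so that hypothesis \eqref{eq:lemma-hyp} reads $L>M$. By the definition of the limit, applied with tolerance $(L-M)/2>0$, there is an $\e_2\in(0,\e_1]$ with $\inf_{z\in\p V}|\CF_k(z,\e)/\e^{k+1}|>(L+M)/2$ for all $\e\in(0,\e_2]$. Dividing the identity above by $\e^{k+1}$ and applying the reverse triangle inequality then gives, for $z\in\p V$ and $\e\in(0,\e_2]$,
\[
\frac{|f(z,\e)|}{\e^{k+1}}\ge\frac{|\CF_k(z,\e)|}{\e^{k+1}}-|r(z,\e)|>\frac{L+M}{2}-M=\frac{L-M}{2}>0,
\]
which establishes \eqref{Ha} on $(0,\e_2]$.

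For the degree condition, I would invoke the homotopy invariance of the Brouwer degree along the straight-line homotopy $H(z,s)=\CF_k(z,\e)+s\,\e^{k+1}r(z,\e)$, $s\in[0,1]$, which interpolates between $\CF_k(\cdot,\e)$ and $f(\cdot,\e)$ and is jointly continuous. The same reverse-triangle-inequality estimate shows $|H(z,s)|\ge\e^{k+1}(L-M)/2>0$ on $\p V\times[0,1]$ for every $\e\in(0,\e_2]$, so $H$ is admissible and hence $d_B(f(\cdot,\e),V,0)=d_B(\CF_k(\cdot,\e),V,0)\neq0$ by hypothesis. With \eqref{Ha} and the nonzero degree verified on $(0,\e_2]$, Theorem~\ref{thm:main1} (applied with $\e_2$ in place of $\e_1$) delivers, for each $\e\in(0,\e_2]$, a $T$-periodic solution $\f(t,\e)\in V$ of \eqref{eq:e1}, equivalently of \eqref{eq:order-k-ode}, which is the assertion for all sufficiently small $\e$.

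The argument is essentially a soft application of degree-theoretic stability once the identity $f=\CF_k+\e^{k+1}r$ is in hand, and I do not expect a serious obstacle. The only point requiring care is the \emph{uniform} control of the perturbation: the finiteness of $M$ on the compact set $\ov V\times[0,\e_1]$ is what makes both the nonvanishing estimate and the homotopy estimate uniform in $z\in\p V$, and it is precisely this uniformity that hypothesis \eqref{eq:lemma-hyp} is designed to supply.
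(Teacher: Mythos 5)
Your proof is correct and follows essentially the same route as the paper: both reduce Theorem~\ref{thm:main2} to Theorem~\ref{thm:main1} via the decomposition $f(z,\e)=\CF_k(z,\e)+\e^{k+1}r(z,\e)$, using hypothesis \eqref{eq:lemma-hyp} to verify the nonvanishing condition \eqref{Ha} and to transfer the nonzero Brouwer degree from $\CF_k(\cdot,\e)$ to $f(\cdot,\e)$. The only difference is cosmetic: where the paper quotes Lemma~\ref{NNL} (with $V_\e=V$) to get $d_B(f(\cdot,\e),V,0)=d_B(\CF_k(\cdot,\e),V,0)$, you re-derive that equality inline via the straight-line homotopy and the reverse triangle inequality, which is precisely the standard proof of that lemma.
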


As a consequence of Theorem \ref{thm:main2} we get our third main result.

\begin{mtheorem}\label{thm:main3}
Consider the continuous $T$-periodic non-autonomous differential equation \eqref{eq:order-k-ode}. Suppose that for some $\ell\in\{1,2,\ldots,k\},$ $f_0=\ldots=f_{\ell-1}=0,$ $f_{\ell}\neq0,$ and let $z^*\in D$ be an isolated zero of $f_{\ell}.$ Assume that there exists a bounded neighbourhood  $V\subset \R^n$ of $z^*,$ with $\ov V\subset D$ and $f_{\ell}(z)\neq0$ for every $z\in \ov{V}\setminus\{z^*\},$ such that hypothesis ${\bf H}$ holds and $d_B(f_{\ell},V,0)\neq 0.$ Then, there exists $\ov \e\in(0,\e_1]$ such that, for each $\e\in(0,\ov\e]$, the differential equation \eqref{eq:order-k-ode} has a $T-$periodic solution $\f(t,\e)$ satisfying $\f(t,\e)\in \ov V,$ for every $t\in[0,T],$ and  $\f(\cdot,\e)\to z^*$ uniformly as $\e\to 0.$
\end{mtheorem}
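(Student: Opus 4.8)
The plan is to deduce the statement from Theorem~\ref{thm:main2} applied with $k=\ell$. Since $f_1=\cdots=f_{\ell-1}=0$, the $\ell$-truncated averaged function collapses to a single term,
\[
\CF_\ell(z,\e)=\sum_{j=1}^{\ell}\e^j f_j(z)=\e^\ell f_\ell(z),
\]
so that $\CF_\ell(z,\e)/\e^{\ell+1}=f_\ell(z)/\e$. First I would rewrite \eqref{eq:order-k-ode} in the form required by Theorem~\ref{thm:main2} at order $\ell$, namely $x'=\sum_{i=1}^{\ell}\e^i F_i(t,x)+\e^{\ell+1}\wt R(t,x,\e)$ with $\wt R(t,x,\e)=\sum_{i=\ell+1}^{k}\e^{\,i-\ell-1}F_i(t,x)+\e^{\,k-\ell}R(t,x,\e)$, which is continuous, $T$-periodic in $t$, and defined for $\e\in[0,\e_0]$; the corresponding remainder $r$ appearing in \eqref{eq:lemma-hyp} is then continuous, hence bounded on the compact set $\ov V\times[0,\e_1]$.

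Next I would check the two hypotheses of Theorem~\ref{thm:main2}. Because $V$ is a neighbourhood of $z^*$ we have $z^*\in V$, so $\partial V\subset\ov V\setminus\{z^*\}$ and therefore $f_\ell$ does not vanish on the compact set $\partial V$; thus $c:=\inf_{z\in\partial V}|f_\ell(z)|>0$. Consequently
\[
\inf_{z\in\partial V}\left|\frac{\CF_\ell(z,\e)}{\e^{\ell+1}}\right|=\frac{1}{\e}\inf_{z\in\partial V}|f_\ell(z)|=\frac{c}{\e},
\]
which tends to $+\infty$ as $\e\to0^+$ and therefore exceeds $\max\{|r(z,\e)|:(z,\e)\in\ov V\times[0,\e_1]\}$; hence \eqref{eq:lemma-hyp} holds. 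For the degree, since $\e^\ell>0$ is a positive scalar and the Brouwer degree is invariant under multiplication by a positive constant, $d_B(\CF_\ell(\cdot,\e),V,0)=d_B(\e^\ell f_\ell,V,0)=d_B(f_\ell,V,0)\neq0$ for every $\e\in(0,\e_1]$. Theorem~\ref{thm:main2} then yields, for every sufficiently small $\e>0$, a $T$-periodic solution $\varphi(\cdot,\e)$ of \eqref{eq:order-k-ode} contained in $V$.

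It remains to upgrade ``contained in $V$'' to uniform convergence to $z^*$. My plan is to run the preceding argument on the shrinking family of balls $V_\delta:=B(z^*,\delta)$ with $0<\delta<\dist(z^*,\partial V)$, for which $\ov{V_\delta}\subset V$. For such $\delta$ the nonvanishing hypothesis transfers because $\partial V_\delta\subset\ov V\setminus\{z^*\}$ (compactness again gives a positive infimum), and the degree transfers by excision: since $z^*$ is the only zero of $f_\ell$ in $V$, one has $d_B(f_\ell,V_\delta,0)=d_B(f_\ell,V,0)\neq0$. Granting hypothesis $({\bf H})$ on $V_\delta$, Theorem~\ref{thm:main2} produces, for each $\delta$ and all $\e$ below some $\e_\delta>0$, a $T$-periodic solution contained in $B(z^*,\delta)$; assembling these along a sequence $\delta=1/n\to0$ gives a family $\varphi(\cdot,\e)$ with $\sup_{t}|\varphi(t,\e)-z^*|\to0$ as $\e\to0$.

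The main obstacle is precisely the validity of $({\bf H})$ on the small balls $V_\delta$, which is equivalent to the following averaging estimate: if $(x_m)$ are $T$-periodic solutions of $x'=\e_m\la_m F(t,x,\e_m)$ contained in $\ov V$, with $\e_m\to0$ and $\la_m\in(0,1]$, converging uniformly to a constant $z_0$, then $f_\ell(z_0)=0$. Indeed, negating $({\bf H})$ on $V_\delta$ furnishes, via Remark~\ref{rmk:hyp-h} and Arzel\`a--Ascoli, exactly such a family with $z_0\in\partial V_\delta\subset\ov V\setminus\{z^*\}$, so the estimate would force $f_\ell(z_0)=0$, contradicting $f_\ell\neq0$ there. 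The same estimate applied directly to the solutions furnished on $V$ even bypasses the shrinking step: any uniform subsequential limit $z_0\in\ov V$ of $\varphi(\cdot,\e)$ is constant (its derivative is $O(\e)$) with $f_\ell(z_0)=0$, whence $z_0=z^*$. The delicate point is that, $F$ being merely continuous, the estimate cannot be obtained by Taylor expanding $F$ in $x$ as in the Lipschitz theory; the periodicity relation $\int_0^T F(t,x_m(t),\e_m)\,dt=0$ combined with the uniform convergence only yields $\tfrac1T\int_0^T F(t,z_0,\e_m)\,dt\to0$, an identity that holds automatically and carries no information at order $\ell$. I therefore expect to extract $f_\ell(z_0)=0$ from the higher-order asymptotic expansion of the periodic solutions established in the proof of Theorem~\ref{thm:main2}, which retains the $O(\e_m)$-oscillation of $x_m$ about $z_0$ that the coarse limit discards.
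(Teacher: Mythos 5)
Up to its last step, your proposal is the paper's proof: reduce to order $\ell$ by absorbing the higher-order terms into the remainder (the paper phrases this as ``without loss of generality $\ell=k$''), run Theorem \ref{thm:main2} on shrinking balls $V_\mu=B(z^*,\mu)$, obtain \eqref{eq:lemma-hyp} from $\inf_{z\in\partial V_\mu}|f_\ell(z)|>0$ together with boundedness of $r$ on compacts, obtain the degree condition for $\CF_\ell(\cdot,\e)=\e^\ell f_\ell$ by excision, and convert ``contained in $V_\mu$ for $\e\le\e_\mu$'' into uniform convergence by setting $\mu=\xi$, $\de=\e_\mu$. All of this matches.

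The genuine gap is your treatment of hypothesis $({\bf H})$ on the small balls. You are right to flag that $({\bf H})$ on $V$ does not formally imply $({\bf H})$ on $V_\delta$ (a periodic solution may touch $\partial V_\delta$ while remaining well inside $V$) --- a point the paper itself passes over silently, simply applying Theorem \ref{thm:main2} on $V_\mu$ and thereby treating $({\bf H})$ as available on the balls around $z^*$. But your proposed repair --- deriving $({\bf H})$ on $V_\delta$ from the ``averaging estimate'' that any uniform constant limit $z_0$ of $T$-periodic solutions of $x'=\e_m\la_m F(t,x,\e_m)$ satisfies $f_\ell(z_0)=0$ --- cannot be completed with the tools in this paper, and you essentially concede this yourself. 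The proof of Theorem \ref{thm:main2} (via Theorem \ref{thm:main1} and Lemma \ref{thm:consequence}) is a pure coincidence-degree existence argument: it produces no asymptotic expansion of the periodic solutions, so there is no ``$O(\e_m)$-oscillation'' available to extract order-$\ell$ information from. Without Lipschitz control one cannot expand $F_i(t,x_m(t))$ about $z_0$; the periodicity identity $\int_0^T F(t,x_m(t),\e_m)\,dt=0$ yields in the limit only $f_1(z_0)=0$, which is vacuous once $\ell\ge2$. This is exactly why the paper isolates $({\bf H})$ as a standing hypothesis: as observed right after Theorem \ref{thm:main3}, the derivation you want succeeds only for $\ell=1$ (recovering \cite[Theorem 1.2]{buicua2004averaging}), while for $\ell\ge2$ it must be verified case by case --- the integration-by-parts computation in Section \ref{sec:applic}, showing $G_m^{i,j}=\CO(\e_m^p)$ for every $p$, is precisely a bespoke proof of your estimate for the harmonic-oscillator example, exploiting the polynomial structure of the $F_i$. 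The same objection defeats your suggested ``bypass'' of the shrinking step, which rests on the identical unproven estimate. The fix is not to prove $({\bf H})$ on $V_\delta$ but to do what the paper does: invoke it there, i.e.\ read the theorem's hypothesis as holding on the relevant neighbourhoods of $z^*$, as it does in the intended applications (in Section \ref{sec:applic} the verification of $({\bf H})$ goes through verbatim for every interval $(1-\al,1+\al)$).
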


It is worth mentioning that, for $\ell=1,$ the existence of an isolated zero of $f_1$ ensures that hypothesis ${\bf H}$ holds. Indeed, let $V\subset \R^n$ be a bounded neighbourhood of $z^*,$ with $\ov V\subset D,$ such that $f_{1}(z)\neq0$ for every $z\in \ov{V}\setminus\{z^*\}.$ If hypothesis ${\bf H}$ does not hold on $V$, from Remark \ref{rmk:hyp-h}, we get a sequence of $T$-periodic functions $(x_m)_{m\in\N}$ uniformly converging to a constant function in $\p V,$ let us say $z_0,$ such that 
\[
\int_0^T F(t,x_m(t),\e_m) \d t=0, \quad m\in\N.
\]
Taking the limit in the integral above, we get that $f_1(z_0)=0,$ which contradicts the fact that $f_{1}(z)\neq0$ for every $z\in \ov{V}\setminus\{z^*\}.$ Therefore, \cite[Theorem 1.2]{buicua2004averaging} follows as a corollary of Theorem \ref{thm:main3}, namely:

\begin{corollary}\label{cor:c1}
Consider the continuous $T$-periodic non-autonomous differential equation \eqref{eq:order-k-ode}. Let $z^*\in D$ be an isolated zero of $f_{1}$ and assume that there exists a bounded neighbourhood $V\subset \R^n$ of $z^*,$ with $\ov V\subset D$ and $f_{1}(z)\neq0$ for every $z\in \ov{V}\setminus\{z^*\},$ such that $d_B(f_{1},V,0)\neq 0.$ Then, there exists $\ov \e\in(0,\e_1]$ such that, for $\e\in(0,\ov\e]$, the differential equation \eqref{eq:order-k-ode} has a $T-$periodic solution $\f(t,\e)$ satisfying $\f(t,\e)\in \ov V,$ for every $t\in[0,T],$ and  $\f(\cdot,\e)\to z^*$ uniformly as $\e\to 0.$
\end{corollary}

\smallskip

This paper is structured as follows. Section \ref{sec:prel} contains some basic notions and definitions on degree theory as well as some preliminary results. More specifically, in Section \ref{sec:BD}, we introduce the Brouwer degree for studying zeros of functions defined on finite dimensional spaces; in Section \ref{sec:LSD}, we introduce the Leray-Schauder degree, which is an extension of the Brouwer degree for functions defined on infinite dimensional spaces; in Section \ref{sec:CD}, we introduce the coincidence degree for studying fixed points of operator equations; and in Section \ref{sec:CT}, we discuss a continuation result based on degree theory for solutions of operator equations. Section \ref{sec:proof} is completely devoted to the proof of our main results. Finally, in Section \ref{sec:applic}, we analyze the following continuous higher order perturbation of a harmonic oscillator
\begin{equation}\label{ap1}
\ddot x=-x +\e \big(x^2+\dot x^2)+\e^k \, \dot x  \sqrt[3]{x^2+\dot x^2-1} +\e^{k+1}E(x,\dot x,\e), \vspace{0.2cm}\\
\end{equation}
where $k$ is a positive integer and $E$ is a continuous function on $\R^3.$ Clearly, the differential equation \eqref{ap1} is not Lipschitz in any neighborhood of $\s^1=\{(x,\dot x)\in\R^2:\,x^2+\dot x^2=1\}.$ As an application of our main results, we get  the existence of a periodic solution $x_{\e}(t)$ of \eqref{ap1} satisfying $(x_{\e}(t),\dot x_{\e}(t))\to\s^1$  uniformly  as $\e\to 0$ (see Proposition \ref{prop:ap}). Notice that no previous version of the averaging method could be applied to detect such a periodic solution.

\section{Degree theory and preliminary results}\label{sec:prel} 

This section is devoted to the basic notions and definitions of degree theory as well as some preliminary results.

\subsection{Brouwer degree}\label{sec:BD}
The Brouwer degree is defined as an integer-valued function that assigns to each triple $ (f,V,y_0),$ where $ V\subset\R^n $ is an open bounded subset of $ \R^n, $ $ f:\ov{V}\to \R^n $ is a continuous function, and $y_0\notin f(\p V),$ the number $ d_B(f,V,y_0)$ whose defining properties are:
\begin{enumerate}[label={\bf B.\arabic*},ref={\bf B.\arabic*}]
	\item\label{brouwer:existence} (Existence) If $ d_B(f,V,y_0)\neq 0, $ then $ y_0\in f(V). $ Furthermore, if $ \mathbb{1}:\ov{V}\to\R^n $ is the identity function and $ y_0\in V, $ then $ d_B(\mathbb{1}, V, y_0) = 1. $
	\item\label{brouwer:additivity} (Additivity) If $V_1, V_2\subset V$ are disjoint open subsets of $V$ such 
	that $y_0\notin f(\ov{V}\backslash (V_1\cup V_2)),$ then $$d_B(f, V, y_0) = 
	d_B(f_{\vert {V_1}}, V_1, y_0) + d_B(f_{\vert {V_2}}, V_2, y_0).$$
	\item\label{brouwer:homotopy} (Invariance under homotopy) If $ \{f_t:\ov{V}\to\R^n\, |\, t\in [0,1] \} $ is a continuous homotopy and $ \{ y_t\, |\, t\in [0,1] \} $ is a continuous curve such that $ y_t\notin f_t(\p V),\,\forall t\in [0, 1] $ then $ d_B(f_t,V,y_t) $ is constant in $ t. $
\end{enumerate}

An important property of the Brouwer degree, that follows directly from Property \ref{brouwer:homotopy}, is that it is locally constant, see \cite[Theorem 3.1 (d5)]{deimling1985}:
\begin{itemize}
\item[{\bf B.4}]  (Local constancy) $d_B(g,V,y_0)=d_B(f,V,y_0)$ for every continuous function $g:\ov{V}\to \R^n $ such that $ |g-f| < \dist(y_0,f(\partial V)). $
\end{itemize}

Another result concerning the invariance of the Brouwer degree under small perturbations, that we shall use later on, is the following:

\begin{lemma}[{\cite[Lemma 4]{candido-libre-novaes-2017}}]\label{NNL}
Let $V$ be an open bounded subset of $\R^m.$ Consider the continuous
functions $f_i:\ov{V}\to \mathbb{R}^n,$ $i\in\{0,1,\cdots,\kappa\},$ and
$f,g,r:\ov{V}\times [0,\e_0] \rightarrow \mathbb{R}^n$ given by
\[
g(z,\e)=f_0(z)+\e f_1(z)+\cdots+\e^\kappa f_\kappa(z) \mbox{ and }
f(z,\e)=g(z,\e)+\e^{\kappa+1}r(z,\e).
\]
Let $V_\e \subset V,$ $R=\max\{\vert r(z,\e) \vert :
(z,\e)\in\ov{V}\times [0,\e_0]\}$ and assume that $\vert g(z,\e)
\vert> R\vert \e\vert^{\kappa+1}$ for all $ z \in \p V_\e $ and
$\e\in(0,\e_0].$ Then, for each
$\e\in(0,\e_0]$ we have $d_B\left(f(\cdot,
\e),V_\e,0 \right)=d_B\left(g(\cdot, \e),V_\e,0  \right).$
\end{lemma}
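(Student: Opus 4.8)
The plan is to derive the equality of degrees directly from the homotopy invariance property \ref{brouwer:homotopy} of the Brouwer degree, using the obvious affine deformation from $g$ to $f$. Fix $\e\in(0,\e_0]$ and consider the homotopy $H\colon\ov{V_\e}\times[0,1]\to\R^n$ defined by
\[
H(z,t)=g(z,\e)+t\,\e^{\kappa+1}r(z,\e),
\]
so that $H(\cdot,0)=g(\cdot,\e)$ and $H(\cdot,1)=f(\cdot,\e)$. Because $g$ and $r$ are continuous, $H$ is a continuous homotopy linking precisely the two maps whose degrees we wish to compare, and the constant $0$ will play the role of the curve $y_t$ in \ref{brouwer:homotopy}.

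The key step is to verify that this homotopy is admissible on the boundary, that is, $H(z,t)\neq 0$ for every $z\in\p V_\e$ and $t\in[0,1]$. For such $z$ and $t$, the triangle inequality together with the bounds $t\le 1$ and $|r(z,\e)|\le R$ gives
\[
|H(z,t)|\ge |g(z,\e)|-t\,\e^{\kappa+1}|r(z,\e)|\ge |g(z,\e)|-\e^{\kappa+1}R>0,
\]
where the final strict inequality is exactly the standing hypothesis $|g(z,\e)|> R\,|\e|^{\kappa+1}$ valid on $\p V_\e$. Hence $0\notin H(\p V_\e,t)$ for all $t$, so the homotopy is admissible. Applying \ref{brouwer:homotopy} with the constant curve $y_t\equiv 0$, the map $t\mapsto d_B(H(\cdot,t),V_\e,0)$ is constant; evaluating at the endpoints $t=0$ and $t=1$ yields $d_B(g(\cdot,\e),V_\e,0)=d_B(f(\cdot,\e),V_\e,0)$. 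Since $\e\in(0,\e_0]$ was arbitrary, this proves the lemma.

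There is no genuine obstacle in this argument; the single point deserving attention is the boundary estimate. What makes it work is that $R$ is a \emph{uniform} bound for $r$ over the entire cylinder $\ov V\times[0,\e_0]$, so the same constant controls the perturbation at every stage $t$ of the homotopy and for every admissible $\e$ simultaneously. The hypothesis $|g(z,\e)|>R\,|\e|^{\kappa+1}$ on $\p V_\e$ is calibrated exactly so that the strict inequality is preserved after subtracting the worst-case perturbation $\e^{\kappa+1}R$, which is all that is needed to keep $0$ off the boundary throughout the deformation.
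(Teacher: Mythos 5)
Your argument is correct: the linear homotopy $H(z,t)=g(z,\e)+t\,\e^{\kappa+1}r(z,\e)$ together with the boundary estimate $|H(z,t)|\ge |g(z,\e)|-\e^{\kappa+1}R>0$ on $\p V_\e$ is exactly the standard proof of this lemma, and it is the one given in the cited source \cite{candido-libre-novaes-2017} (the present paper states the lemma without proof, simply invoking that reference). Nothing is missing; in particular you correctly use that $\p V_\e\subset\ov V$ so the uniform bound $R$ applies, and the admissibility at $t=0$ and $t=1$ also guarantees that both degrees are well defined before \ref{brouwer:homotopy} is applied.
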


\subsection{Leray-Schauder degree}\label{sec:LSD}
The Leray-Schauder degree was introduced by Leray and Schauder \cite{LerayShcauder1934} in the context of compact perturbations of the identity on normed linear spaces. 

\begin{definition}[{\cite{cronin1964fixed},\cite[Theorem 8.1]{deimling1985}}]\label{def:lsd}
Let $ X $ be a real normed linear space, $ \O\subset X $ be an open bounded subset of $X,$ and $ M:\ov{\O}\to X $ be a compact mapping. If $ y_0\notin (\Id - M)(\p\O), $ then the Leray-Schauder degree is an integer-valued function defined by
\begin{align*}
d_{LS}(\Id - M, \O, y_0) = d_B((\Id - M_1)_{\vert \O\cap X_1}, \O\cap X_1, y_0),
\end{align*}
where $ M_1:\ov{\O}\to X $ is any compact mapping satisfying  \[\sup_{x\in\ov{\O}}|M_1 x - Mx| < \dist (y_0,(\Id - M)(\p\O)),\]  and $ X_1 $ is any finite dimensional subspace of $X$ such that $ y_0\in X_1$ and $M_1(\ov\Omega)\subset X_1$.
\end{definition}

Considering the setting in Definition \ref{def:lsd}, one can prove that the Leray-Schauder degree has the following properties (see, for instance \cite{deimling1985}):

\begin{enumerate}[label={\bf LS.\arabic*},ref={\bf LS.\arabic*}]
	\item\label{leray-schauder:existence} (Existence) If $ d_{LS}(\Id - M, \O, y_0)\neq 0, $ then there exists $ x\in\O $ such that $ x - Mx = y_0. $
	\item\label{leray-schauder:additivity} (Additivity) If $ \O_1, \O_2\subset \O $ are open disjoint subsets of $ \O $ such that $ y_0\notin (\Id - M)(\ov{\O}\backslash (\O_1\cup\O_2), $ then 
	\begin{multline*}
		d_{LS}(\Id - M, \O, y_0) = d_{LS}((\Id - M)_{\vert \O_1}, \O_1, y_0) + d_{LS}((\Id - M)_{\vert \O_2}, \O_2, y_0).
	\end{multline*}
	\item\label{leray-schauder:homotopy} (Invariance under homotopy) Let $ H:\ov{\O}\times[0,1]\to X $ be a compact mapping and $ \{ y_t\in X\, |\, t\in [0,1] \} $ be a continuous curve such that $ x - H(x,t)\neq y_t, $ for all $ (x,t)\in\p\O\times [0,1]. $  Then, $d_{LS}(\Id- H(\cdot, t), \O, y_t)$ is constant in $ t. $
\end{enumerate}

One can readily see the similarity between these properties and the properties \ref{brouwer:existence}-\ref{brouwer:homotopy}. Indeed, as we can see from Definition \ref{def:lsd}, the Leray-Schauder degree is obtained from the Brouwer degree by approximating the infinite dimensional space $ X $ by finite dimensional ones. In particular, in this scenario, if $ X $ is finite dimensional, then $d_{LS}(I - M,\O,y_0) = d_B(I - M, \O, y_0).$ 

\subsection{Coincidence degree}\label{sec:CD}
Finally, consider two real normed vector spaces $ X $ and $ Z,$ and  $\dom L$ a subspace of $X.$ Let $ L:\dom L\subset X\to Z $ be a  linear mapping, $\O$ an open bounded subset of $ X,$ and $ N:\ov{\O}\subset X\to Z $ any mapping. The coincidence degree concerns the existence of solutions of the operator equation  
\[
Lx = Nx, \,\, x\in\ov \Omega,
\] under suitable assumptions on $L,$ $N,$ and $\O.$

We say that $ L $ is a {\it Fredholm mapping} of index $0$ if $ \Im L $ is a closed subset of $ Z ,$ $ \ker L$ and $\coker L = Z/\Im L $ are finite dimensional, and $ \dim\ker L = \dim\coker L. $ 

Now, we introduce the concept of $ L- $compact mapping for a  Fredholm mapping $L$ of index $0.$ 
Let  $ P:X\to X $ and $ Q:Z\to Z $ be continuous projections such that the sequence 
\begin{equation}\label{eq:exact-seq}
\begin{tikzcd}
X \arrow{r}{P} & \dom L \arrow{r}{L} & Z \arrow{r}{Q} & Z
\end{tikzcd}
\end{equation}
is exact, i.e. $\Im P = \ker L$ and $\Im L = \ker Q.$ It can be seen that $ L_P = L_{\vert \ker P~\cap~\dom L} $ is an isomorphism. Therefore, we can take its inverse, denoted by $ K_P, $ and define the generalized inverse of $ L $ by $K_{P,Q} = K_P (\Id- Q). $ Also, denote by $ \Pi:Z\to \coker L $ the canonical projection that sends any $ y\in Z $ onto its equivalence class in $ \coker L. $ Accordingly, we say that a mapping  $N:\ov{\O}\subset X\to Z $ is {\it $ L- $compact} on $\ov{\O}$ if the mappings $\Pi N:\ov{\O}\subset X\to \coker L$ and $ K_{P,Q} N:\ov{\O}\subset X\to X$ are compact on $\ov{\O}$, that is, $\Pi N$ and $ K_{P,Q} N $ are continuous on $\ov{\O}$ such that $\Pi N(\ov{\O}) $ and $ K_{P,Q} N (\ov{\O}) $ are relatively compact. At this point it is worth noting that the projectors $ P $ and $ Q $ are not unique in general, but one can prove that the definition of $ L- $compactness does not depend upon the choices of $ P $ and $ Q. $

The next proposition is a key result for the definition of coincidence degree.

\begin{proposition}[{\cite[Proposition III.0]{gaines1977coincidence}}]\label{Prop:fixedpoint}
Let $ L:\dom L\subset X\to Z $ be a linear mapping. If there exists a linear injective mapping $ \L:\coker L\to\ker L, $ then $ Lx = y $, for some $y\in Z,$ if, and only if, 
$ (\Id - P)x = (\Lambda\Pi + K_{P, Q})y.$
\end{proposition}

For $y=Nx,$ this proposition says that, as long as there exists a linear injective mapping $ \L:\coker L\to\ker L, $
  the set of solutions of $ Lx = Nx $ is equal to the set of fixed points of the mapping 
  \[
  M = P + (\Lambda\Pi + K_{P, Q})N.
  \] Moreover, $M$ can be proven to be a compact mapping  provided that $L$ is a Fredholm mapping of index $0$ and $N$ is $ L- $compact on $\ov\Omega$ (see \cite[Propositions III.2 and III.3]{gaines1977coincidence}). Accordingly, if one has, in addition, that $ 0 \notin (L - N)(\p \O\cap \dom L), $ then the Leray-Schauder degree of $ \Id - M $ with respect to $ \O $ and 0, $ d_{LS}(\Id - M,\O,0), $ is well-defined. This motivates the following definition:

\begin{definition}\label{def:cd}
	Let $ L:\dom L\subset X\to Z $ be a linear Fredholm mapping of index $0$ and $ N:\ov{\O}\subset X\to Z $ be an $L$-compact mapping on $\ov \Omega$.  The coincidence degree of $ L \mbox{ and } N $ with respect to $ \O $ is defined as $ d((L,N),\O) := d_{LS}(\Id - M,\O,0). $
\end{definition}

\begin{remark}
 In the definition above, it is worthwhile to point out that  $ |d((L,N),\O)| $ only depends  on $ L, N $ and $ \O. $ The sign of $ d((L,N),\O) $ depends on wether or not $ \Lambda $ is an orientation preserving isomorphism.
\end{remark}

	In the above setting, the properties \ref{leray-schauder:existence}-\ref{leray-schauder:homotopy} of the Leray-Schauder degree induce the following properties on the coincidence degree.
\begin{enumerate}[label={\bf C.\arabic*},ref={\bf C.\arabic*}]
	\item\label{coincidence:existence} If $ d((L,N),\O)\neq 0, $ then there exists $ x\in\O $ such that $ Lx = Nx. $
	\item\label{coincidence:additivity} If $ \O_1,\O_2\subset\O $ are open disjoint subsets of $ \O, $ then 
	\[d((L,N),\O) = d((L,N),\O_1) + d((L,N),\O_2). \]
	\item\label{coincidence:homotopy} Let $ N:\ov{\O}\times [0, 1]\to Z $ be a $ L- $compact mapping on $ \ov{\O}\times [0,1] $ such that $0 \notin (L - N(\cdot,t))(\dom L\cap \p \O)$ for each $ t \in [0, 1].$ Then, $ d((L,N(\cdot,t)),\O) $ is constant in $ t. $
\end{enumerate}

\subsection{A continuation theorem}\label{sec:CT}

Let $ L:\dom L\subset X\to Z $ be a linear Fredholm mapping of index $0$ and consider  $ P:X\to X $ and $ Q:Z\to Z $  continuous projections such that the sequence \eqref{eq:exact-seq} is exact, that is,  $ \Im P = \ker L $  and $ \Im L = \ker Q.$ Let $ {N}:\ov{\O}\times [0,1]\to Z $ be an $ L- $compact mapping on $ \ov{\O}\times [0,1]$, where $ \O\subset X $ is open and bounded. 

In this section, following  \cite[Chapter IV]{gaines1977coincidence} closely, we discuss sufficient conditions in order to guarantee that the operator equation
\begin{align}\label{eq:cor-iv1:operator-eq}
Lx = \la {N}(x,\la),\,\, (x,\la)\in\ov\Omega\times[0,1],
\end{align}
has solutions for each $\la\in[0,1].$ The following proposition is an important tool in this quest.

	\begin{lemma}[{\cite[Lemma IV.1]{gaines1977coincidence}}]\label{lemma:basic-lemma-02}
For each $\la\in (0,1],$ the set of solutions of \eqref{eq:cor-iv1:operator-eq} coincides with the set of solutions of
\[
Lx = \wt N(x,\la):= QN(x,\la) + \la (\Id - Q)N(x,\la).
\]
For $ \la=0, $ every solution of the latter equation is a solution of \eqref{eq:cor-iv1:operator-eq}.
\end{lemma}

Accordingly, based on the coincidence degree theory discussed in the previous section, we shall compute the coincidence degree $d((L,\wt N(\cdot,\la)),\Omega),$ for each $\la\in[0,1].$
Thus, assuming
\begin{itemize}
\item[{\bf A.1}]\label{itm:cor-iv1-1} $ Lx\neq \la {N}(x,\la), $ for every $ x\in\dom L\cap\partial\O$ and $ \la\in (0,1);$ and

\item[{\bf A.2}]\label{itm:cor-iv1-2} $ Q{N}(x,0)\neq 0, $ for every $ x\in\ker L\cap\partial\O,$
\end{itemize}
we have that either $Lx = \wt N(x,1),$ for some $x\in\dom L\cap\p\Omega,$ or
\begin{equation}\label{claim}
d((L,\wt N(\cdot,\la)),\Omega)=d_B(JQ{N}(\cdot,0)_{|_{\ker L\cap\O}},\ker L\cap\O,0),
\end{equation}
for each $\la\in[0,1],$ where  $J:\Im Q\rightarrow \ker L$ is an isomorphism.

 Indeed, it is straightforward to see that $\wt N$ is $L$-compact on $\ov\Omega\times[0,1]$. Assuming {\bf A.1}, {\bf A.2}, and $Lx \neq \wt N(x,1),$ for every $x\in\dom L\cap\p\Omega,$ and taking Lemma \ref{lemma:basic-lemma-02} into account, one can see that $0 \notin (L - \widetilde N(\cdot,\la))(\dom L\cap \p \O),$ for each $\la \in [0, 1].$ Thus, by property {\bf C.3}, we get
		\begin{equation}\label{lemma:eq1}
			d((L,\wt N(\cdot,\lambda),\O)= d((L,QN(\cdot,0)),\O),
		\end{equation}
		for each $\la \in [0, 1].$
		By definition of the Coincidence Degree (see Definition \ref{def:cd}), we have
\begin{equation}\label{lemma:eq2}
\begin{aligned}
			d((L,QN(\cdot,0)),\O) & = d_{LS}(\Id - P - (\L\Pi + K_{P,Q})QN(\cdot,0),\O,0)\\
			& = d_{LS}(\Id - P - \L\Pi QN(\cdot,0),\O,0),
			\end{aligned}
\end{equation}
		where, we recall, $ \Pi:Z\to \coker L $ is the canonical projection and 
 $ \L:\coker L\to\ker L $ is an isomorphism. Therefore, applying the definition of the Leray-Schauder Degree  (see Definition \ref{def:lsd})  for $ X_1 = \ker L $ and $ M_1 = M = P + \L\Pi QN(\cdot,0),$ and using the fact that $ (\Id - P)_{\vert\ker L} = 0, $ we obtain that
		\begin{equation}\label{lemma:eq3}
			\begin{aligned}
			 d_{LS}(\Id - P -& \L\Pi  QN(\cdot,0),\O,0)\\
			    &= d_B((\Id - P - \L\Pi Q N(\cdot,0))_{\vert \O\cap\ker L},\O\cap\ker L,0)\\
				& = d_B(-\L\Pi Q {N(\cdot,0)}_{\vert \O\cap\ker L},\O\cap\ker L,0),\\
				&= d_B(JQ {N(\cdot,0)}_{\vert\O\cap\ker L},\O\cap\ker L, 0),
			\end{aligned}
		\end{equation}
		where $J=-\L\Pi_Q$ and $\Pi_Q := \Pi_{\vert \Im Q} $ are isomorphisms. 
		Taking the relationships \eqref{lemma:eq1},  \eqref{lemma:eq2},  and \eqref{lemma:eq3} into account, we get \eqref{claim}.

Notice that, in the reasoning above, we are fixing the isomorphism $ \L:\coker L\to\ker L $ and choosing  $ J = -\L\Pi_Q.$  Nevertheless, since $ \L$ is arbitrary and $\Pi_Q$ is an isomorphism, we could fix any isomorphism $J:\Im Q\rightarrow \ker L$ and choose $\L=-J\Pi_Q^{-1}.$ 

Then, we get the following continuation result, which was proven in \cite{gaines1977coincidence}:
\begin{theorem}[{\cite[Corollary IV.1]{gaines1977coincidence}}]\label{cor:iv.1}
	In addition to condition {\bf A.1} and {\bf A.2}, assume that 
$ d_B(JQ{N}(\cdot,0)_{|_{\O\cap\ker L}},\O\cap\ker L,0)\neq 0. $
	Then, the operator equation \eqref{eq:cor-iv1:operator-eq} admits a solution, which lies in $\O$ $($resp. $\ov \O)$ for $\la\in[0,1)$ $($resp. $\la=1).$ 
\end{theorem}

It is worth mentioning that, in the construction of the Brouwer degree performed in Section \ref{sec:BD}, we are tacitly assuming that the involved spaces are not $ 0- $dimensional. However, the Brouwer degree can be extended to the $0-$dimensional scenario by defining 
$d_B(\Id,\{0\},0)=1$ and $d_B(\Id,\emptyset,0)=0$ (see \cite[Section IV]{gaines1977coincidence}). With that in mind, Theorem \ref{cor:iv.1} also holds when $\ker L=\{0\}$. Indeed, in this case, $P=0,$ $Q=0,$ $\Pi=0,$ and $K_{P,Q}=L^{-1}.$ Thus, on can see that conditions {\bf A.2} and $ d_B(JQ{N}(\cdot,0)_{|_{\O\cap\ker L}},\O\cap\ker L,0)\neq 0$ are equivalent to $0\notin \p\Omega$ and $0\in \O$, respectively. Therefore, going back to relationship \eqref{lemma:eq2}, we obtain 	$d((L,QN(\cdot,0)),\O) = d_{LS}(\Id,\O,0)=1$, and then Theorem \ref{cor:iv.1} follows.

\section{Proof of the main results}\label{sec:proof}

We denote by $C[0, T]$ the space of all continuous functions defined in $[0, 
T]$ with values in $ \R^n $ and define the function spaces \[ C_0 = \{ x\in C[0, T]~\colon~ x(0) = 0\}\quad\mbox{ and }\quad C_T = \{ x\in C[0, T]~\colon~ x(0) = 
x(T)\},\] both endowed with the sup-norm making them into real Banach spaces. Set $X=C_T$ and $Z=C_0$ and, for a given open bounded subset $V$ of $\R^n,$ take $ \O = \{ x\in C_T\colon x(t)\in V,\,\forall\, t\in [0, T] \}, $ which is an open bounded subset of $ C_T. $

Define the linear mapping $ L\colon C_T\to C_0 $ by
\[
Lx(t) = x(t)- x(0),
\]
 and, for each $\e\in (0,\e_0],$  define $ N_{\e}:\ov{\O}\to C_0 $ by $$ N_{\e}(x)(t) = \int_0^t  \e \, F(s,x(s),\e)\d s. $$

Notice that a function $x\in C_T$ is a $ T-$periodic solution of the differential equation \eqref{eq:e1} in $\ov V$ if, and only if, it is a solution of the operator equation
\begin{equation}\label{eq:functional-eq}
Lx =N_{\e}(x), \,\, x\in\ov\Omega.
\end{equation}

\begin{proof}[{\bf Proof of Theorem \ref{thm:main1}}]

Consider $N_{\e}(x,\la)=N_{\e}(x).$ In order to obtain the existence of a solution of the operator equation \eqref{eq:functional-eq} and conclude this proof, we shall apply Theorem \ref{cor:iv.1}  for $\lambda=1$ to the operator equation
\begin{equation}\label{eq:Nlambda}
Lx =\lambda N_{\e}(x,\la),\,\, (x,\la)\in\ov\Omega\times[0,1].
\end{equation}

Firstly, we must check that $L$ is a Fredholm mapping of index $0$ and that $ N_{\e} $ is $ L- $compact on $\ov\Omega\times[0,1]$, for each $ \e\in (0,\e_0].$
Notice that  $\Im L=C_T\cap C_0,$ which is closed in $C_0.$ In addition,
\[
 \ker L = \big\{ x\in C_T:x(t) = z,\, z\in \R^n\big\},
\] 
 that is, the space of all constant function in $ \R^n,$ which can be identified with $ \R^n,$ and
\[
\coker L =\big \{[y]:=y+\Im L:y\in C_0\big\}.
\]
One can readily see that 
 $[y_1] = [y_2] $ if, and only if, 
 $ y_1(T) = y_2(T),$ which means that $ \coker L $ can also be identified with $ \R^n. $ 
Hence,  $\dim \ker L=\dim \coker L$ and, therefore, $L$ is a Fredholm mapping of index $0.$ Moreover, the natural projection $ \Pi:C_0\to\coker L$ is given by  $ \Pi y = [y(T)].$
Now, in the above setting, consider the continuous projections
  $ P\colon C_T\to C_T$ and $Q\colon C_0\to C_0 $ defined by
 \[ 
 Px(t) = x(0)\,\, \text{ and }\,\, Qy(t) = \dfrac{t~y(T)}{T}, \,\, \text{ for }\,\, t\in [0,T], 
 \]
 respectively, and let $\L:\coker L\to\ker L $ be defined by $\L[z](t) = -z,$ for $t\in [0,T].$
 From here, it is straightforward to check that $ \Im P = \ker L $  and $ \Im L = \ker Q, $ which implies that the sequence in \eqref{eq:exact-seq} is exact, and that $ N_{\e} $ is $ L- $compact on $\ov\Omega\times[0,1],$ for each $ \e\in (0,\e_1].$ 

In addition, $ \Im Q=\big\{ x\in C_0:x(t) = t\, v,\, v\in\R^n \big\},$ 
which can be identified with $ \R^n.$ Thus, consider the isomorphism $J:  \Im Q\rightarrow \ker L$ given by 
\[
J y(t)=\dfrac{y(T)}{T}.
\] 

Now, we are in position of checking the conditions to apply Theorem \ref{cor:iv.1} .

Notice that $Lx=\la N_{\e}(x,\la)$ for some $x\in\ov\Omega,$ $\la\in[0,1],$ and $\e\in(0,\e_1]$
if, and only if, $x$ is a $ T-$periodic solution of the differential equation \eqref{eq:H}. Thus, taking hypothesis ${\bf H}$ into account, we get that, for each $\e\in(0,\e_1],$ $Lx\neq\la N_{\e}(x,\la),$ for every $ x\in\dom L\cap\partial\O $ and $ \la\in (0,1)$. Therefore,  condition {\bf A.1} of Theorem \ref{cor:iv.1}  holds, for each $\e\in(0,\e_1].$

 		In addition, for $ x\in\ker L\cap\partial\O, $ that is, $ x(t)\equiv z\in\partial V,$ 
				\begin{align*}
		QN_{\e}(x,0)(t) = \dfrac{t}{T}\int_0^T \e F(s,z,\e)\d s =t f(z,\e),
		\end{align*}
		which, by hypothesis, is not the $0$ constant function in $C_0.$ Therefore,  condition {\bf A.2}  of Theorem \ref{cor:iv.1}  holds, for each $\e\in(0,\e_1].$

		 Finally, for $ x\in\ker L\cap\O, $ say $ x(t)\equiv z\in V, $ we have
		$JQN_{\e}(x,0) = f(z,\e). $
		Thus, \[ d_B(JQN_{\e}(\cdot,0)_{|\ker L\cap\O},\ker L\cap\O,0) = d_B(f(\cdot,\e),V,0). \]
		We claim that $d_B(f(\cdot,\e),V,0)\neq0$ for each $\e\in(0,\e_1].$ Indeed, denote $
		\mathcal{E}=\{\e\in(0,\e_1]:\,d_B(f(\cdot,\e),V,0)\neq0\}.
		$
		By hypothesis, there exists $\e^*\in(0,\e_1]$ such that $d_B(f(\cdot,\e^ *),V,0)\neq0,$ thus $\mathcal{E}\neq\emptyset.$ Moreover, given $\hat\e\in\mathcal{E}$, by hypothesis \eqref{Ha} and compactness of $\p V$, there exists a small open interval $\mathcal{I}$ containing $\hat\e$ such that $f(z,\e)\neq0$ for all  $z\in\partial V$ and $\e\in \mathcal{I}$ and, then, from Property {\bf B.4}, $\mathcal{I}$ can be taken smaller if necessary in order that $d_B(f(\cdot,\e),V,0)=d_B(f(\cdot,\hat\e),V,0)\neq 0,$ for every $\e\in \mathcal{I}.$ Thus, $\mathcal{I}\cap(0,\e_1]\subset  \mathcal{E}$, which means that $\mathcal{E}$ is open in $(0,\e_1].$ Analogously, one can see that $(0,\e_1]\setminus\mathcal{E}=\{\e\in(0,\e_1]:\,d_B(f(\cdot,\e),V,0)=0\}$ is open in $(0,\e_1]$ and, consequently, $\mathcal{E}$ is also closed in $(0,\e_1].$ Hence, from the connectedness of $(0,\e_1]$, we obtain $\mathcal{E}=(0,\e_1].$ 
		
		Therefore, we conclude that all conditions of Theorem \ref{cor:iv.1}  hold, for each $\e\in(0,\e_1].$ Hence, applying Theorem \ref{cor:iv.1}  for $\lambda=1$ to the operator equation \eqref{eq:Nlambda} for each $\e\in(0,\e_1]$ , we get the existence of a solution of the operator equation \eqref{eq:functional-eq} and, consequently, a  $T-$periodic solution $\f(t,\e)$ of the differential equation \eqref{eq:e1}, for each $\e\in(0,\e_1],$   such that $\f(t,\e)\in \ov V$ for every $t\in[0,T].$ 		
\end{proof}

\begin{remark}
In the proof of Theorem A, the maps $L$ and $N_{\e}$ are not the unique possibility for obtaining the result. Indeed, as pointed out by an anonymous referee, one could take $\dom L=\{x\in C_T:\, x \text{ is differentiable}\},$   $Lx(t)=\dot x(t),$ and $N_{\e}(x)(t)=\e F(t,x(t),\e).$ Then, by a suitable choice of the projectors $P$ and $Q$, the proof would follow analogously.
\end{remark}

 \begin{proof}[{\bf Proof of Theorem \ref{thm:main2}}]
For system \eqref{eq:order-k-ode}, we have
\[ f(z,\e) = \dfrac{1}{T}\int_{0}^{T}\left( \sum_{j=1}^{k} \e^j F_j(s,z) + \e^{k+1}R(s,z,\e)\right)\d s = \CF_k(z,\e) + \e^{k+1} r(z,\e). \]
By hypothesis \eqref{eq:lemma-hyp}, there exists $\ov \e\in(0,\e_1]$ such that
$$\left\lvert \CF_k(z,\e)\right\rvert > \lvert\e^{k+1}\rvert\, \max\{ |r(z, \e)|\colon (z,\e)\in \ov{V}\times [0,\e_1]\},$$ for every $ \e \in (0,\ov \e].$
In particular, hypothesis \eqref{Ha} of Theorem \ref{thm:main1} holds for $\e\in(0,\ov\e].$ 
In addition, taking $V_{\e}=V$ in Lemma \ref{NNL}, we get that $d_B(f(\cdot, \e),V, 0) = d_B(\CF_k(\cdot, \e), V, 0)\neq0,$  for $\e\in(0,\ov \e].$
Finally, by hypothesis, $d_B(\CF_k(\cdot, \e), V, 0)\neq 0$ for $\e>0$ sufficiently small. Thus applying a topological argument, analogous to the one used at the end of the proof of Theorem \ref{thm:main1}, we get \[d_B(f(\cdot, \e),V, 0)=d_B(\CF_k(\cdot, \e), V, 0)\neq 0,\] for every $\e\in (0,\ov \e].$  From here, the result follows from Theorem \ref{thm:main1}.
\end{proof}

\begin{remark}\label{estimationTB}
In the proof of Theorem \ref{thm:main2}, one can see that $\ov\e$ can be chosen to be any value in $(0,\e_1]$ such that
\[
\inf_{z\in\p V}\left\lvert \CF_k(z,\e)\right\rvert > \lvert\e^{k+1}\rvert\, \max\{ |r(z, \e)|\colon (z,\e)\in \ov{V}\times [0,\e_1]\},
\]
for every $\e\in(0,\ov\e]$. This provides a way for estimating the interval of the parameter $\e$ where we have ensured the existence of a $T$-periodic solution of the differential equation \eqref{eq:order-k-ode}.
\end{remark}

\begin{proof}[{\bf Proof of Theorem \ref{thm:main3}}]

Without loss of generality, we can assume that $\ell=k.$ Consider neighbourhoods $ V_\mu = B(z^*,\mu)\subset V,$ for $ \mu > 0 $ sufficiently small. Clearly $ V_\mu\to\{ z^*\} $ as $ \mu\to 0. $ Now, $ f_0 = \cdots = f_{k-1} = 0,$ then $\CF_k(\cdot,\e)=\e^k f_k(z).$
 Moreover, since $f_k(z)\neq0,$  for every $z\in\p V_\mu$ and $ r(z,\e) $ is continuous, consequently, bounded on compact sets, we conclude that 
 \[
 \begin{array}{rl}
 \ds\lim_{\e\to 0} \inf_{z\in\p V_{\mu}} \left\lvert \dfrac{\e^k f_k(z)}{\e^{k+1}} \right\rvert =&\ds\!\!\! \lim_{\e\to 0} \inf_{z\in\p V_{\mu}} \left\lvert \dfrac{f_k(z)}{\e} \right\rvert = \infty \\
 >&\ds\!\!\!\max\{|r(z,\e)|: (z,\e)\in\ov V\times[0,\e_1]\}.
 \end{array}
 \]
 Thus, hypothesis \eqref{eq:lemma-hyp} of Theorem \ref{thm:main2} holds. In addition, for every $\e>0,$ we have $ d_B(\CF_k(\cdot,\e),V_\mu,0) = d_B(f_k(z),V_\mu,0),$ which, is distinct from zero, by hypothesis. Hence, by Theorem \ref{thm:main2}, there exists $ \ov \e_\mu > 0 $ and a $ T- $periodic solution $ \f(\cdot,\e) $ of \eqref{eq:order-k-ode} such that $ \f(t,\e)\in \ov V_\mu,\,\forall\, t\in [0, T] $ and for each $ \e\in (0,\ov \e_\mu]. $ Now, given any $ \xi > 0, $ put $ \mu = \xi/2 $ and $ \de = \ov \e_\mu. $ By the conclusion above, $ 0<\e < \de $ implies $ \sup_{t\in [0,T]} |\f(t,\e) - z^*| \leq \xi/2< \xi. $ That is precisely to say that $ \f(\cdot,\e) \to z^*$  uniformly  as $\e\to 0.$ This completes the proof.
\end{proof}

\begin{remark}\label{estimationTC}
In Theorem \ref{thm:main3}, taking Remark \ref{estimationTB} into account, one can see that, for any $\ov\e\in(0,\e_1]$ such that
\[
0<\ov \e<\dfrac{1}{M_{\ell}}\min_{z\in\p V}|f_{\ell}(z)|,
\]
where 
\[
M_{\ell}=\max\{|f_{\ell+1}(z)+\cdots+\e^{k-\ell-1} f_{k}(z)+\e^{k-\ell}r(z,\e)|:\,(z,\e)\in\ov V\times[0,\e_1]\},
\]
a $T-$periodic solution $\f(t,\e)\subset\ov V$ of differential equation \eqref{eq:order-k-ode} exists for every $\e\in(0,\ov\e].$ It is worth mentioning that, since $\ov\e\in(0,\e_1],$ the estimation above is helpful only when the value of $\e_1$ is known, which is established by hypothesis ${\bf H}$.  Remark \ref{rmk:hyp-h} provides a route to prove the existence of such $\e_1,$ however, estimating its value is not always possible. 
\end{remark}

\section{Non-Lipschitz perturbation of a harmonic oscillator}\label{sec:applic}

Consider the continuous higher order perturbation of a harmonic oscillator \eqref{ap1},
\[
\ddot x=-x +\e \big(x^2+\dot x^2)+\e^k \,  \dot x  \sqrt[3]{x^2+\dot x^2-1} +\e^{k+1}E(x,\dot x,\e),
\]
where $k$ is a positive integer and $E$ is a continuous function on $\R^3.$ Clearly, the differential equation \eqref{ap1} is not Lipschitz in any neighborhood of $\s^1=\{(x,\dot x)\in\R^2:\,x^2+\dot x^2=1\}.$ In the next result, Theorem \ref{thm:main3} is applied to show the existence of a periodic solution $x_{\e}(t)$ of \eqref{ap1} satisfying $(x_{\e}(t),\dot x_{\e}(t))\to\s^1$  uniformly  as $\e\to 0.$ Notice that such a periodic solution is not detectable by any Lipschitz version of averaging method.  
\begin{proposition}\label{prop:ap}
For any positive integer $k$ and $|\e|\neq0$ sufficiently small, the differential equation \eqref{ap1} admits a periodic solution $x(t;\e)$ satisfying $(x(t;\e),\dot x(t;\e))\to \s^1$  uniformly  as $\e\to 0.$
\end{proposition}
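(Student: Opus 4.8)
The plan is to bring \eqref{ap1} into the standard form of Theorem \ref{thm:main2} and to analyse the resulting averaged function near $\s^1$. First I would write the scalar equation as the planar system $\dot x=y$, $\dot y=-x+\e(x^2+y^2)+\e^k x\sqrt[3]{x^2+y^2-1}+\e^{k+1}E(x,y,\e)$, and then remove the unperturbed rotation generated by $\ddot x=-x$ through the variation-of-parameters coordinates $u=x\cos t-y\sin t$, $v=x\sin t+y\cos t$. Because this change of variables is independent of $\e$ and preserves the Euclidean norm, $u^2+v^2=x^2+y^2=:\rho^2$, it carries \eqref{ap1} into a $2\pi$-periodic system of the form \eqref{eq:order-k-ode} with no intermediate orders, namely $F_2=\cdots=F_{k-1}\equiv0$, $F_1(t,u,v)=(-\rho^2\sin t,\rho^2\cos t)$ and $F_k(t,u,v)=(u\cos t+v\sin t)\sqrt[3]{\rho^2-1}\,(-\sin t,\cos t)$.

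Next I would compute the averages $f_j=\frac{1}{2\pi}\int_0^{2\pi}F_j\,\d t$. The order-one forcing integrates to zero, so $f_1=0$, and trivially $f_2=\cdots=f_{k-1}=0$; hence $\CF_k(z,\e)=\e^k f_k(z)$. Integrating $F_k$ gives $f_k(z)=\tfrac12\sqrt[3]{|z|^2-1}\,(-z_2,z_1)$ for $z=(z_1,z_2)$, so that, away from the origin, the zero set of $f_k$ is exactly $\s^1$. I would then fix an annular region $V=\{z:\ a<|z|<b\}$ with $0<a<1<b$, on whose boundary $f_k$ does not vanish, and aim to apply Theorem \ref{thm:main2}, afterwards letting $a\uparrow1$ and $b\downarrow1$ to obtain the uniform convergence $(x_\e,\dot x_\e)\to\s^1$.

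Two of the three hypotheses of Theorem \ref{thm:main2} are comparatively routine. Condition \eqref{eq:lemma-hyp} is immediate, since $\CF_k=\e^k f_k$ with $\inf_{\p V}|f_k|>0$ forces $|\CF_k/\e^{k+1}|=|f_k|/\e\to\infty$ uniformly on $\p V$, dominating $\max|r|$. Hypothesis $({\bf H})$ I would check by contradiction via Remark \ref{rmk:hyp-h}: its failure yields $T$-periodic solutions $x_m\in\ov V$ converging uniformly to a constant $z_0\in\p V$ with $\int_0^{2\pi}F(t,x_m,\e_m)\,\d t=0$; the delicate point is that, because $f_1\equiv0$, the first-order limit of this identity is vacuous, so one must expand it to order $k$ to conclude $f_k(z_0)=0$, i.e. $z_0\in\s^1$, contradicting $z_0\in\p V$.

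The step I expect to be the main obstacle is the degree condition $d_B(\CF_k(\cdot,\e),V,0)=d_B(f_k,V,0)\neq0$. The difficulty is \emph{intrinsic}: $f_k$ vanishes along the whole circle $\s^1$ rather than at isolated points, so Theorem \ref{thm:main3} is unavailable and the index of such a curve of zeros is topologically constrained. The lever I would try to use is the sign change of $\sqrt[3]{\rho^2-1}$ across $\s^1$, which makes $f_k$ point to opposite radial sides of $\s^1$ on the inner and outer boundary circles of $V$; controlling the winding of $f_k$ along $\p V$ from this sign change — and thereby ruling out the degenerate cancellation that a circle of zeros threatens to produce — is the crux on which the whole application rests.
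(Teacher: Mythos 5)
There is a genuine gap, and it is fatal to the proposed route: the degree condition you defer to the end is not merely ``the main obstacle'' --- it is provably false in your formulation. With your (correct) computation $f_k(z)=\tfrac12\sqrt[3]{|z|^2-1}\,(-z_2,z_1)$, the field $f_k$ is \emph{purely tangential}: $f_k(z)\cdot z=0$ everywhere, so your lever --- that $f_k$ ``points to opposite radial sides of $\s^1$'' on the two boundary circles --- is incorrect; the sign change of $\sqrt[3]{|z|^2-1}$ only flips clockwise versus counterclockwise rotation. On the annulus $V=\{a<|z|<b\}$, parametrize each boundary circle by $z(\phi)=\rho(\cos\phi,\sin\phi)$: then $f_k=\tfrac12 c(\rho)\rho\,(-\sin\phi,\cos\phi)$ with $c(\rho)=\sqrt[3]{\rho^2-1}$, and since negating a loop does not change its winding number, $f_k$ winds exactly $+1$ on \emph{both} circles. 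Hence $d_B(f_k,V,0)=1-1=0$, and Theorem \ref{thm:main2} gives nothing. There is also a structural reason no repair exists inside your setup: the original equation \eqref{ap1} is autonomous, and its limit cycle has period $2\pi+\CO(\e)$, generically different from $2\pi$; a $2\pi$-periodic solution of your rotating-frame system in the time variable $t$ would be a $2\pi$-periodic solution of \eqref{ap1} itself, which generically does not exist. The vanishing degree is the correct answer to the question you posed, which is the wrong question.

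The paper's proof avoids this by passing to polar coordinates $x=r\cos\T$, $\dot x=r\sin\T$ and taking the \emph{angle} $\T$ as the new independent variable, so that closed orbits become $2\pi$-periodic in $\T$ regardless of their time-period. This collapses the problem to a scalar equation $dr/d\T=\e F(\T,r,\e)$, the circle of zeros collapses to the isolated zero $r^*=1$ of $f_k(r)=\tfrac{r}{2}\sqrt[3]{r^2-1}$, which changes sign there, giving $d_B(f_k,V,0)\neq0$ on $V=(1-\al,1+\al)$; the paper then applies Theorem \ref{thm:main3} (with the shrinking neighbourhoods built in) rather than Theorem \ref{thm:main2} directly. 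One further caution: the step you flag as ``delicate'' in verifying $({\bf H})$ --- expanding $\int_0^{2\pi}F(t,x_m(t),\e_m)\,\d t=0$ to order $k$ --- is indeed substantial, because $x_m$ is only \emph{asymptotically} constant, so the lower-order contributions are a priori only $\CO(\e_m)$, not $\CO(\e_m^{k-1})$; the paper kills them by a recursive integration-by-parts argument (the estimates $G_m^{i,j}=\CO(\e_m^p)$ for every $p$, obtained by repeatedly substituting the differential equation for $r_m'$). Your sketch would need an analogous bootstrap even in the rotating frame, but this point is moot given the degree obstruction above.
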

\begin{proof}
Changing to polar coordinates  $x=r \cos\T, \dot x=-r\sin\T$ and taking $ \theta $ as the new independent variable, the differential equation \eqref{ap1} becomes
\begin{equation}\label{eq:polar}
\begin{array}{rl}
\dfrac{\d r}{\d\T}=\e F(\T,r,\e),
\end{array}
\end{equation}
where
\[
\begin{array}{rl}
F(\T,r,\e)=& -\displaystyle\sum_{i=1}^{k-1}\e^{i-1} r^{i+1}\cos^{i-1}\T\sin\T\vspace{0.2cm}\\
&+\e^{k-1} r\Big(\sqrt[3]{r^2-1}\sin\T -r^k\cos^{k-1}\T\Big)\sin\T+\e^{k}R(\T,r,\e),
\end{array}
\]
which is not Lipschitz in any neighbourhood of $ r = 1.$ Let $V=(1-\al,1+\al)$ for some $0<\al<1.$ Notice that 
\[
f_i=0,\,\text{  for }\, i\in\{1,2,\ldots,k-1\},\,\text{ and }\, f_k(r)=\dfrac{r\sqrt[3]{r^2-1}}{2}.
\]
Moreover, $ f_k $ has a unique positive zero $r^* = 1 $ and is homotopic to the mapping $r\mapsto r-1$ in $ V .$ Therefore, $ d_B(f_k,V,0) \neq 0. $

In what follows we shall assume that $\e>0.$ The result for $\e<0$ can be obtained analogously just by considering $-F(\T,r,\e).$ In order to apply Theorem \ref{thm:main3}, it remains to check hypothesis $ {\bf H}.$ From Remark \ref{rmk:hyp-h}, the negation of hypothesis ${\bf H}$ provides numerical convergent sequences $(\e_m)_{m\in\mathbb{N}}\subset(0,\e_0]$ and $(\la_m)_{m\in\mathbb{N}}\subset(0,1),$ such that $\e_m\to 0$ as $m\to \infty,$ and a sequence $(r_m)_{m\in\N}$ of $2\pi$-periodic solutions of
	\begin{equation}\label{eq:homot-eq}
		r'=\e_m\la_m F(\T,r,\e_m),\quad r\in\ov V,
	\end{equation}
for which there exists $\T_m\in[0,2\pi]$ such that $r_m(\T_m)\in\partial V$ for each $m\in\mathbb{N}.$ As an application of  Arzel\'{a}-Ascoli's Theorem, the sequence of functions $(r_m)_{m\in\mathbb{N}}$ can be taken uniformly convergent to a constant function $r_0\in\partial V=\{1\pm\al\}.$ In particular,
\[\label{contra:edo}
\int_0^{2\pi} F(\T,r_m(\T),\e_m)\d\T=0,
\]
which implies that
\begin{equation}\label{contra:rela}
\int_0^{2\pi} r_m(\T)\sqrt[3]{r_m(\T)^2-1}\sin^2\T \d\T=\dfrac{1}{\e_m^{k-1}}\sum_{i=1}^{k}\e_m^{i-1} G_m^{i+1,i-1} +\CO(\e_m),
\end{equation}
where
\[ G^{i,j}_m = \int_0^{2\pi} r_m(\t)^i\cos^j(\T)\sin(\T)~\d\T. \]
Here, although $ (\e_m)_{m\in\N} $ is a \emph{numerical} sequence, we borrow the Landau's symbol notation $ h_m=\CO(\e_m^p), $ for some $ p\in\N, $ to mean that there exists a positive constant $ C $ such that $ |h_m|\leq C |\e_m^p|, $ for $ m $ sufficiently large.
Note that, by applying integration by parts and using that $ r_m(\T) $ is $ 2\pi- $periodic, we obtain
\begin{align*}
	G_m^{i,j} & = -\dfrac{i}{j+1}\int_0^{2\pi} r_m(\t)^{i-1}\cos^{j+1}(\t)r_m'(\t)~\d\t.
\end{align*}
Since $r_m(\t)$ satisfies \eqref{eq:homot-eq}, we conclude that
\begin{align*}
	G_m^{i,j}& = -\dfrac{i}{j+1}\sum_{l=1}^{k-1}\la_m\e_m^l\int_0^{2\pi}r_m(\t)^{j+l}\cos^{j+l}(\t)\sin\t~\d\t +\CO(\e_m^k)\\
		& = -\dfrac{i}{j+1}\sum_{l=1}^{k-1}\la_m\e_m^l G_{i+l,j+l}(r_m) +\CO(\e_m^k) \\
		& = \CO(\e_m).
\end{align*}
Applying the above procedure recursively, we conclude that 
$G_m^{i,j} =\CO(\e_m^k).$ Thus, from \eqref{contra:rela}, we get that
\[
\int_0^{2\pi} r_m(\T)\sqrt[3]{r_m(\T)^2-1}\sin^2\T \d\T=\CO(\e_m).
\]
Since $r_m\to r_0\in\{1\pm\alpha\}$ uniformly, we compute the limit of the integral above as
\[
\int_0^{2\pi} r_0\sqrt[3]{r_0^2-1}\sin^2\T \d\T=0,
\]
which is an absurd, because
\[
\int_0^{2\pi} r_0\sqrt[3]{r_0^2-1}\sin^2\T \d\T=\pi r_0\sqrt[3]{r_0^2-1}\neq0,
\]
for $r_0\neq 1.$
Thus, we obtain that hypothesis $ {\bf H} $ holds and Theorem \ref{thm:main3} can be applied in order to conclude this proof.\end{proof}

It is worth mentioning that the software application MATHEMATICA\textregistered~\linebreak was used to illustrate numerically the existence of the periodic solution ensured by Proposition \ref{prop:ap} for some values of $k$ and $\e.$ In Figures \ref{fig1}, \ref{fig2}, and \ref{fig3} we show the displacement function obtained for some of these simulations, which has its zero corresponding to a periodic solution.

\begin{figure}[H]
\begin{tabular}{cc}
 \includegraphics[width=6.5cm]{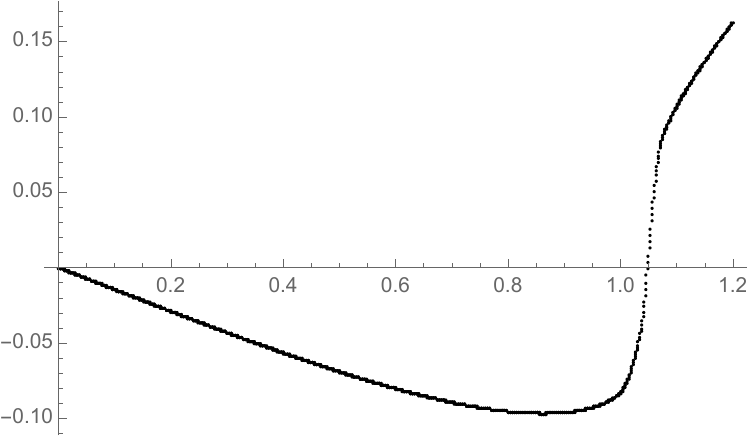} &\includegraphics[width=6.5cm]{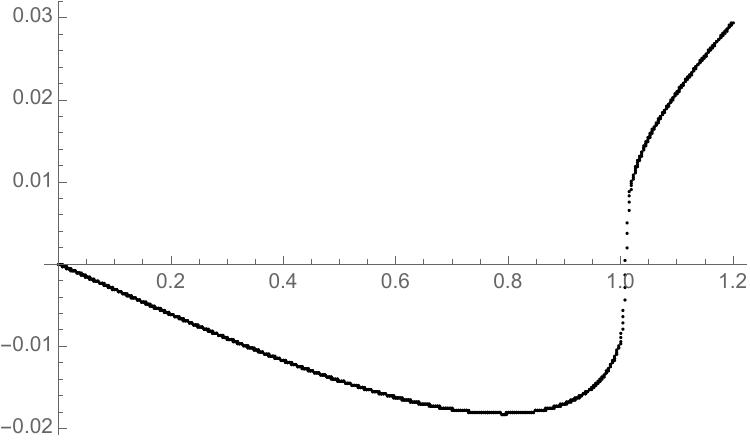}\\
\end{tabular}
\caption{Displacement function of differential equation \eqref{eq:polar} assuming $k=1$ and $E=0$ for $\e=1/20$ (left) and $\e=1/100$ (right).}\label{fig1}
\end{figure}

\begin{figure}[H]
\begin{tabular}{cc}
 \includegraphics[width=6.5cm]{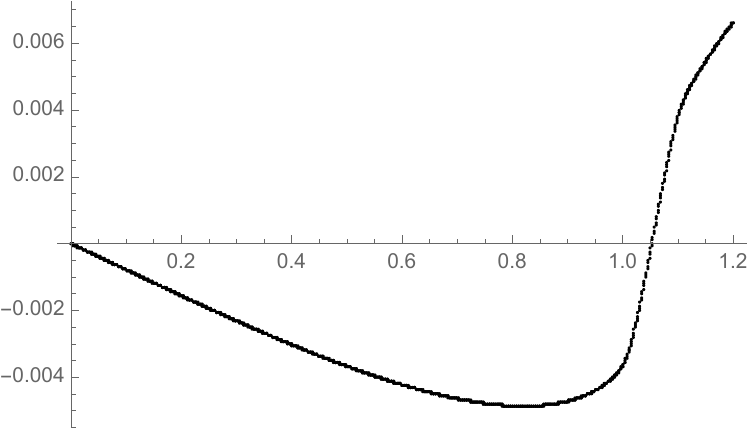} &\includegraphics[width=6.5cm]{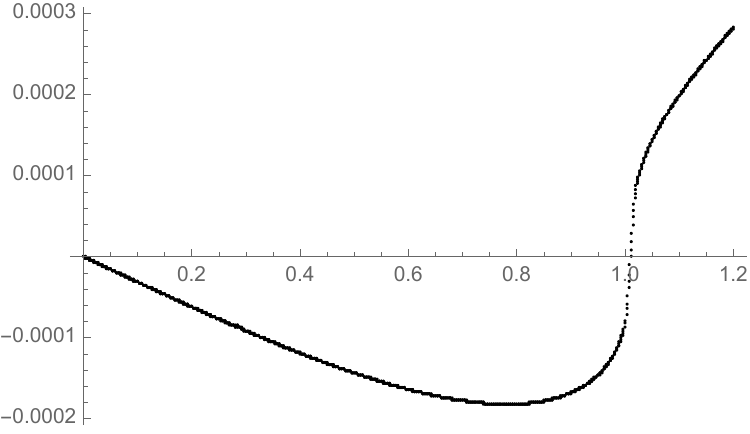}\\
\end{tabular}
\caption{Displacement function of differential equation \eqref{eq:polar} assuming $k=2$ and $E=0$ for $\e=1/20$ (left) and $\e=1/100$ (right).}\label{fig2}
\end{figure}

\begin{figure}[H]
\begin{tabular}{cc}
 \includegraphics[width=6.5cm]{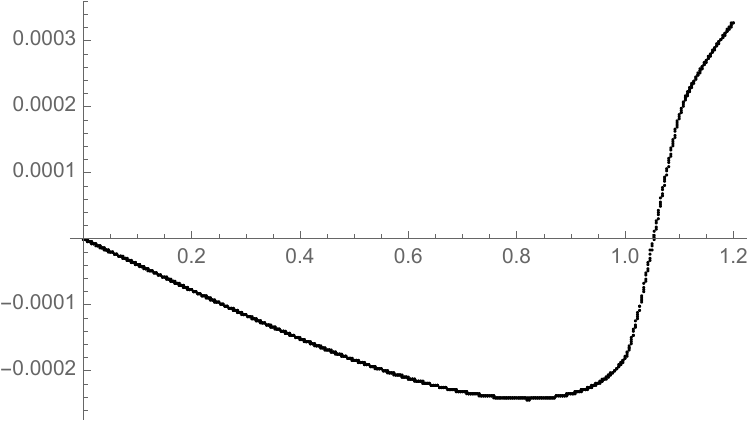} &\includegraphics[width=6.5cm]{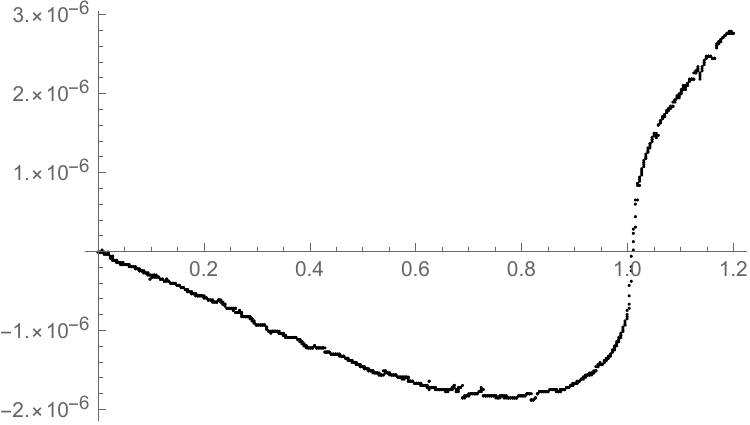}\\
\end{tabular}
\caption{Displacement function of differential equation \eqref{eq:polar} assuming $k=3$ and $E=0$ for $\e=1/20$ (left) and $\e=1/100$ (right). }\label{fig3}
\end{figure}

\section*{Acknowledgements}

The authors thank the referees for the constructive comments and suggestions which led to an improved version of the paper.

The authors also thank Espa\c{c}o da Escrita -- Pr\'{o}-Reitoria de Pesquisa -- UNICAMP for the language services provided.

DDN is partially supported by Funda\c{c}\~{a}o de Amparo \`{a} Pesquisa do Estado de S\~{a}o Paulo (FAPESP) grants 2018/16430-8, 2018/13481-0, and 2019/10269-3, and by Conselho Nacional de Desenvolvimento Científico e Tecnol\'{o}gico (CNPq) grants 306649/2018-7 and 438975/2018-9. FBGS is partially supported by Funda\c{c}\~{a}o de Amparo \`{a} Pesquisa do Estado de S\~{a}o Paulo (FAPESP) grant 2018/22689-4.

\bibliographystyle{abbrv}
\bibliography{references.bib}
\end{document}